\documentclass[12pt]{article}
\usepackage{amsmath}
\usepackage{graphicx,psfrag,epsf}
\usepackage{enumerate}
\usepackage{natbib}
\usepackage{amsmath,amsthm,amsfonts,epsfig,amssymb,natbib,eucal,eufrak}
\usepackage{color}

\newtheorem{lem}{Lemma}


\newtheorem{result}{Result}
\newtheorem{comment}{Comment}
\newtheorem{remark}{Remark}

\newtheorem{pro}{Proposition}
\newcommand{\blind}{0}

\addtolength{\oddsidemargin}{-.5in}%
\addtolength{\evensidemargin}{-.5in}%
\addtolength{\textwidth}{1in}%
\addtolength{\textheight}{1.3in}%
\addtolength{\topmargin}{-.8in}%

\begin{document}

\def\spacingset#1{\renewcommand{\baselinestretch}%
{#1}\small\normalsize} \spacingset{1}


\if0\blind
{
  \title{\bf A note on the minimax solution for the two-stage group testing problem}
  \author{Yaakov Malinovsky \thanks{
    The work was partially supported by a 2013 UMBC Summer Faculty Fellowship grant }\hspace{.2cm}
    \thanks{Corresponding author}\\
    Department of Mathematics and Statistics\\ University of Maryland, Baltimore County, Baltimore, MD 21250, USA\\
    and \\
    Paul S. Albert\thanks{
    The work was supported by the {\it Eunice Kennedy Shriver} National Institute of Child Health and Human Development intramural program.}\hspace{.2cm}\\
    Biostatistics and Bioinformatics Branch\\ Division of Intramural Population Health Research\\ {\it Eunice Kennedy Shriver} National Institute of Child Health\\ and Human Development, Bethesda, MD 20892, USA}
  \maketitle
} \fi

\if1\blind
{
  \bigskip
  \bigskip
  \bigskip
  \begin{center}
    {\LARGE\bf A note on the minimax solution for the two-stage group testing problem}
\end{center}
  \medskip
} \fi

\bigskip
\begin{abstract}
Group testing is an active area of current research and has important applications in medicine, biotechnology, genetics, and product testing. There have been recent advances in design and estimation, but the simple Dorfman procedure introduced by R. Dorfman in 1943 is widely used in practice.
In many practical situations the exact value of the probability $p$ of being affected is unknown.
We present both minimax and Bayesian solutions for the group size problem when $p$ is unknown. For unbounded $p$ we show that the minimax solution for group size is $8$, while using a Bayesian strategy with Jeffreys prior results in a group size of $13$. We also present solutions when $p$ is bounded from above.
For the practitioner we propose strong justification for using a group size of between eight to thirteen when a constraint on $p$ is not incorporated and provide useable code for computing the minimax group size under a constrained $p$.

\end{abstract}

\noindent%
{\it Keywords:} Loss function; Optimal design; Optimization problem
\vfill

\newpage
\spacingset{1.45} 
\section{Introduction}
\label{sec:intro}
The purpose of this article is to propose a practical and simple group testing procedure that performs well in a wide range of situations.
Group testing procedures save cost and time and have wide spread applications, including
blood screening \citep{Dorfman1943, F1964, L1994, G1994, DH2012, M2012, T2013}, quality control in product testing \citep{SG1959, SG1966}, computation biology \citep{D2005}, DNA screening \citep{Dh2006, G2012}, and photon detection \citep{v2013}. According to \cite{H2006}, group testing began as early as 1915, when it was used in dilution studies for estimating the density of organisms in a biological medium.

In his 1950 book, William Feller nicely described the group testing problem as: `` {\it A large number, $N$, of people are subject to a blood test. This can be administered in two ways. (i) Each person tested separately. In this case $N$ tests are required. (ii) The blood samples of $k$ people can be pooled and analyzed together. If the test is negative, this one test suffices for the $k$ people. If the test is positive, each of the $k$ persons must be tested separately, and in all $k+1$ tests are required for the $k$ people. Assume the probability $p$ that the test is positive is the same for all  and that people are stochastically independent.}"
Procedure $(ii)$ is commonly refered to as the Dorfman two-stage group testing procedure (DTSP) \citep{Dorfman1943, S1978}.
Interesting historical comments related to this problem can be found in the introduction of the book by \cite{Dh1999}.

Let $E\left(k, p\right)$ be the expected number of tests per person using DTSP with a group size $k$ and probability of infection $p$. Then $E\left(1, p\right)=1$ and $E\left(k, p\right)=1-(1-p)^k+k^{-1}$ for $k\geq 2$.
An important issue for the DTSP is to find an optimal value of $k$, $k^{*}=k^{*}(p)$, that minimizes the expected number of tests for a given $p$.

\cite{S1978} solved this optimization problem as follows.
Let $[x]$ and $\{x\}=x-[x]$ denote the integer and fractional parts of $x$, respectively. $k^{*}$ is a non-increasing function of $p$, which is $1$ for $p>1-1/3^{1/3}\approx 0.31$, and otherwise is either $1+[p^{-1/2}]$ or $2+[p^{-1/2}]$. If $\{p^{-1/2}\}<[p^{-1/2}]/\left(2[p^{-1/2}]+\{p^{-1/2}\}\right)$, then $k^{*}=1+[p^{-1/2}]$.
If  $\{p^{-1/2}\}>[p^{-1/2}]/\left(2[p^{-1/2}]+\{p^{-1/2}\}\right)$, then  to find out which of the values $k^{'}=1+[p^{-1/2}]$ or $k^{''}=2+[p^{-1/2}]$ is optimal, one plugs them into $E\left(k, p\right)$.

The DTSP is not an optimal procedure and can be improved by introducing more than two stages (e.g., \cite{SG1959} {for known $p$\color{blue}}). However, the optimal testing algorithm  (unknown $p$) is unknown, and it is a difficult optimization problem \citep{Dh1999}.
\cite{U1960} proved that if $p>(3-5^{1/2})/2\approx 0.38$, then there does not exist an algorithm that is better than individual one-by-one testing.
\cite{SG1966} presented a Bayesian model for the multistage group testing problem based on {\color{blue} a} prior distribution of $p$.
\cite{ST1990} derived adaptive procedures for the two-stage group testing problem based on a beta prior distribution.
Although the DTSP is not optimal, it is often used in practice due to its simplicity
\citep{T1993, M2000, W2008}.

In many practical situations the exact value $p$ of the
probability of being affected (e.g., disease prevalence)
is unknown, and therefore the optimal group size cannot be calculated.
In this note, we derive both the minimax group size as well as the Bayesian solution under reasonable prior distributions for the DTSP. A comparison of the solutions under these different alternatives will aid the practitioner in the design of future applications of group testing.
We first present the loss function needed for the minimax solution.

\section{Loss Function}
\label{Sec:L}
 If we know the value of $p$, then by using the result from \cite{S1978} we achieve the minimum value for $E\left(k, p\right)$ in DTSP by:
\begin{equation}
\label{eq:D}
E\left(k^{*}(p), p\right)=\left\{\begin{array}{ccc}
                            1-(1-p)^{k^{*}(p)}+\frac{1}{k^{*}(p)} & for & 0< p \leq 1-(1/3)^{1/3} \\
                            1 &  & otherwise,
                          \end{array}
                          \right.
\end{equation}
where we can write the expected number of tests per person in a group of size $k$ as
\begin{equation}
\label{eq:E}
E\left(k, p\right)=
\left\{\begin{array}{ccc}
                            1-(1-p)^{k}+\frac{1}{k} & for & k>1 \\
                            1 & for & k=1.
                          \end{array}
                          \right.
\end{equation}
It is important to note that according to \cite{S1978}, $k^{*}(p)$
is a non-increasing function of $p$, and $k^{*}(p)\rightarrow \infty$ as $p\downarrow 0$.
Combining this fact with \eqref{eq:D}, and using the particular form of $k^{*}(p)$ it is possible to show (see  Appendix \ref{AA:0}) that
$E\left(k^{*}(p), p\right)\rightarrow 0$, as $p\downarrow 0$.
Also, from Comment \ref{com:2} (Appendix \ref{B:b} ) it follows that $E\left(k^{*}(p), p\right)$ is a non-decreasing function of $p$.

We define the loss in DTSP as a difference between the expected number of tests and the expected number of tests under an optimal DTSP. Specifically,
\begin{equation}
\label{eq:Loss}
L\left(k, p\right)=E\left(k, p\right)-E\left(k^{*}(p), p\right).
\end{equation}

It is important to note that this type  of loss function was considered by \cite{R1952}
for  the  two-armed bandit problem. Although \cite{R1952} discussed a different problem from ours, there is a similarity between the two problems in that both try to quantify the loss ``due to
ignorance of the true state of affairs'' and to seek a minimax solution.

Another loss function is
$\displaystyle L_{2}\left(k, p\right)=\frac{E\left(k, p\right)}{E\left(k^{*}(p), p\right)}-1$,
which reflects a relative rather than absolute change. However, it cannot be used
since $\lim_{p\downarrow 0}E\left(k^{*}(p), p\right)=0$ and the measure becomes undefined when $p$ is near $0$.
Also, it is important to note that the expected number of tests per person, $E\left(k, p\right)$, itself is not an appropriate loss function for obtaining the minimax solution since $E\left(k, p\right)$ is a nondecreasing function of $p$ for any $k$ (i.e, $E\left(k, p\right)$ is maximized at the upper bound of $p$ for all $k$).




\section{Minimax Solution for Unbounded $p$ }
\label{sec:M}
We define the minimax group size as a group size that minimizes the largest loss $L\left(k, p\right)$,
\begin{equation}
\label{eq:mm}
k^{**}= \arg\min_{k \in \mathbb{N}^{+}}\sup_{p\in (0, 1-(1/3)^{1/3}]} L\left(k, p\right).
\end{equation}
From equation \eqref{eq:D} it follows that the DTSP has utility only when $p$ is in the range $(0,1-(1/3)^{1/3}]$, while individual testing is optimal outside this range.
Also it is important to note that $\displaystyle \sup_{p\in (0, 1-(1/3)^{1/3}]} L\left(k, p\right)=\sup_{p\in (0, 1)} L\left(k, p\right)$ (see Appendix \ref{AA:0}, Result \ref{res:3}).
In Appendix \ref{AAA:p}, Lemma \ref{lem:m} we are able to obtain a closed-form expression for $k^{**}$ in \eqref{eq:mm}.
The proof uses the following two steps to show that $k^{**}=8$.
\\
\noindent
{\bf Step 1}: Fix $k$, $k \in \left\{1,2,3,\ldots,\right\}$. Find  $p^{*}(k)=\arg\sup_{p \in (0, 1-(1/3)^{1/3}]}L\left(k, p\right)$.\\
\noindent
{\bf Step 2}: Find $k^{**}$, where
\begin{equation}
\label{eq:3}
k^{**}=\arg\min_{k \in \left\{1,2,3,\ldots\right\}}L\left(k, p^{*}(k)\right).
\end{equation}
We present a heuristic argument for finding the minimax solution which we present as follows (we also use the same argument when additional information on an upper bound of $p$ is incorporated: see Section \ref{sec:R}).
\noindent
In Step 1 we performed a grid search with incremental steps of $10^{-6}$.  We present the graphs of $L\left(k, p\right)$ as a function of $p\in (0, 1-(1/3)^{1/3}] $
for $k=1,\ldots,9$ (Figure \ref{fig:comp0}). We noticed that $p^{*}(1)=p^{*}(2)=\ldots=p^{*}(7)=0$ and therefore
$L\left(k, p^{*}(k)\right)=1/k$ for $k=1,2,\ldots,7$. It is clear that there is a jump at $k=8$ for $p^{*}(k)$, reflecting the worst case for $p$ (largest loss) in Step 1.
 \begin{figure}[!h]
 \centering
\centerline{\includegraphics[ width=\textwidth]{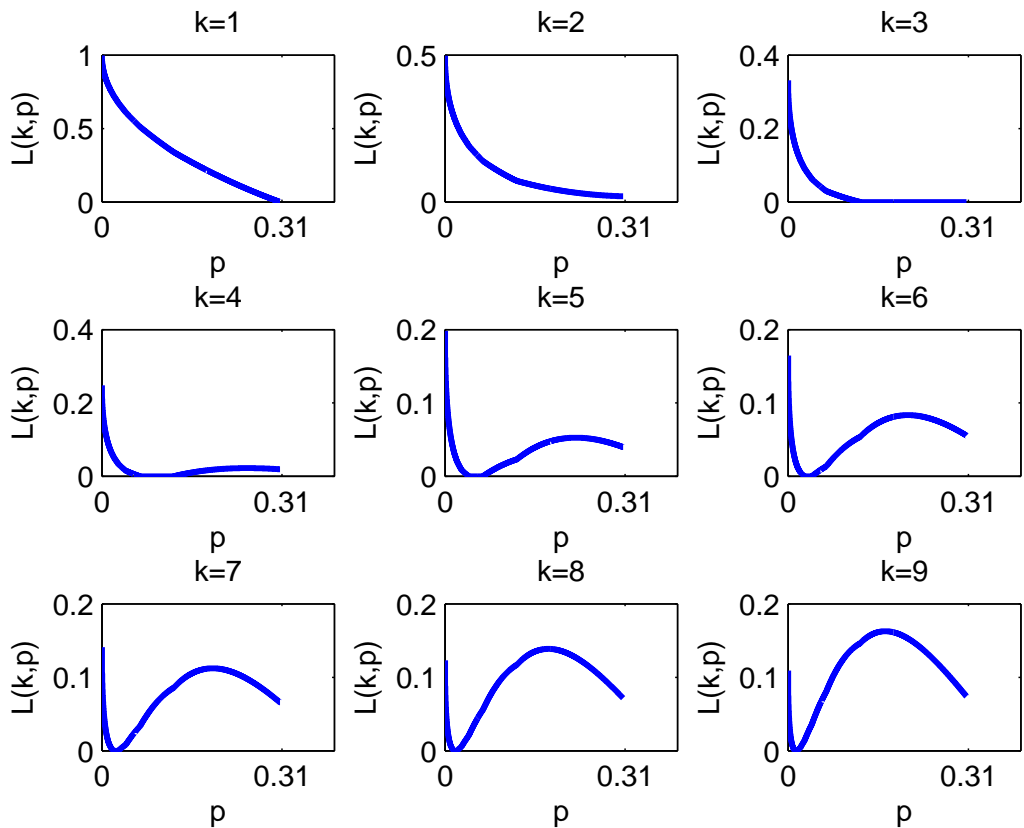}}
\caption{\small $L\left(k, p\right)$ as a function of $p\in (0, 1-(1/3)^{1/3}] $.}
\label{fig:comp0}
\end{figure}
Note that the loss function \eqref{eq:Loss} is not a smooth function of $p$. In Result \ref{res:2} (Appendix \ref{AAA:p}), we prove that $L\left(k,\,p^{*}(k) \right)$ is a unimodal function of $k$ for $k\geq 1$. We also obtain a close-form expression for $p^{*}(k)$ (see Remark \ref{Rem:1}, Appendix \ref{AAA:p}).

Table 1 presents the results of the maximization of the loss function (Step 1) as a function of $p$ for a given $k$. The minimax solution is then obtained by minimizing the loss function as a function
of $k$ (Step 2) (we present Matlab code for the two-step procedure in Appendix \ref{A:a}).

\begin{table}[!h]
\caption{Two-step Solution} 
\begin{center}
 \begin{tabular}{ c|ccccccccc}
   $k$                             & 1 & 2 & 3 &\ldots& 7 & 8 & 9 & 10  \\
   \hline
   $p^{*}(k)$                      & $0$ &$0$ & $0$ &\ldots& $0$ & $ 0.178$ & $0.167$ &  $0.158$\\
   \hline
   $L\left(k, p^{*}(k)\right)$     & $1$ & $ 1/2$ & $1/3$ & \ldots &$1/7$ & $0.138$ & $0.162$ & $0.184$  \\
\end{tabular}

\begin{tabular}{  ccccc}
    $25$ & 50 & 100 & 1000 & 10000 \\
   \hline
   $0.083$ &$ 0.049$ & $0.029$ & $ 0.004$ & $ 0.0005$ \\
   \hline
   $0.382$ & $0.516$ & $0.628$ & $0.858$ & $0.949$ \\
\end{tabular}
\label{table:1} 
\end{center}
\end{table}

As shown in Table 1, the minimax solution is $k^{**}=8$.
In order to evaluate the performance of the minimax solution, we compare it with the optimal solution assuming $p$ is known.
Specifically, we compare the optimal expected number of tests per individual $E\left(k^{*}(p), p\right)$ with the minimax expected number of tests
$E\left(k^{**}, p\right)$. Table 2 presents the ratio of these two quantities defined as $RE(p)=E\left(k^{**}, p\right)/E\left(k^{*}(p), p\right)$
for different values of $p$.
\begin{table}[!h]
\caption{Relative efficiency of minimax design} 
\begin{center}
 \begin{tabular}{ c| ccccccccc}
   $p$                             & 0.0001 & 0.0005& 0.001 & 0.005 & 0.01 & 0.05 & 0.10 & 0.25 & 0.30  \\
   \hline
   $RE(p)$                      & $6.305$ & $2.900$&$2.118$ & $1.181$ & $1.034$ & $  1.082$ & $1.169$ &  $ 1.124$ &$1.078$\\
   \hline
   $RE_{J_{1}}(p)$                      & $3.921$ & $ 1.875$&$1.432$ & $1.007$ & $ 1.020$ & $ 1.322$ & $ 1.385$ &  $ 1.156$ &$1.078$\\
\end{tabular}
\label{table:2} 
\end{center}
\end{table}
Table 2 suggests that as long as $p$ is not very small ($p<0.005$), the minimax solution is close to optimal. In the situation where we believe that $p$ is very small and we know the upper bound for $p$, we can obtain the minimax solution in the restricted parameter space. This is discussed in section \ref{sec:R}.
In Table 2 we also present the relative performance of the Bayesian solution under a Jeffrey's prior to the optimal solution as the ratio $RE_{J_{1}}(p)=E\left(k^{*}_{J_{1}}, p\right)/E\left(k^{*}(p), p\right)$, where $k^{*}_{J_{1}}$ is the optimum group size under a Jeffrey's prior. This will be discussed in section \ref{sec:U}.

In practice it is useful to identify the range of the values of $p$ for which minimax design $k^{**}=8$ is optimal.
More specifically, for any group size $l$ we can compute the range $\displaystyle [p^{\,*}_{l}, p^{\,**}_{l}]$ of the values of $p$ for which the group size $l$ is optimal.
The derivation of $p^{\,*}_{l}$ and $p^{\,**}_{l}$ is presented in Appendix \ref{B:b} as Proposition 1.
As a consequence of Proposition 1, we find that the minimax design is optimal over a range of $p$ of $\displaystyle p^{\,*}_{8}=0.0157$ to
$p^{\,**}_{8}=0.0206$.

In the next section we propose a Bayesian alternative to the minimax solution.

\section{Bayesian Solution}
\label{sec:U}
For the Baysian design we are required to specify a prior distribution $\pi(p)$ for the probability $p$ of being affected.
The basic assumption of our model is that the affected status $X_i,\,i=1,2,\ldots$ are i.i.d. Bernoulli random variables with parameter $p$ (i.e., $X_{i}\,|\,p\sim Ber(p)$).
Under the Bayesian consideration with the specified prior distribution $\pi$, the analog to the  loss function \eqref{eq:Loss} is
\begin{equation}
\label{eq:BLoss}
L_{\pi}\left(k\right)=\int L\left(k, p\right) \pi(p)\,dp=E_{\pi}\left(k\right)-E^{*}_{\pi},
\end{equation}
where $\displaystyle E_{\pi}\left(k\right)=\int {E\left(k, p\right)}\pi({p})\,dp,\,\,\,E^{*}_{\pi}=\int {E\left(k^{*}(p),p\right)}\pi({p})\,dp$.
After choosing a prior, the loss function
$L_{\pi}\left(k\right)=E_{\pi}\left(k\right)-E^{*}_{\pi}$ is a function of $k$ only, and the optimal value of $k$ is simply,
\begin{equation}
\label{eq:Bk}
\displaystyle
k^{*}_{\pi}=\min_{k}L\left(k, \pi\right)=\min_{k}E_{\pi}\left(k\right).
\end{equation}
It is clear that the optimal group size $k^{*}_{\pi}$ in  \eqref{eq:Bk} is a function of the prior distribution $\pi$.
\subsection{Uniform Prior}
If we do not have any prior information about the disease prevalence $p$, i.e., all values of $p$ are equally likely, then the uniform prior is reasonable. Denote the uniform prior by $\pi=I_U$, where $U$ is an upper bound of the distribution support, and $\displaystyle E_{\pi}\left(k\right)$
can be written as
\begin{equation}
\label{eq:I}
E_{I_{U}}\left(k\right)=1+\left\{\frac{1}{k}+\frac{1}{k+1}\left((1-U)^{k+1}-1\right)\right\}1_{\left\{k>1\right\}}.
\end{equation}
For unbounded $p$, $U=1$, $\displaystyle E_{I_{1}}\left(k\right)=1+\frac{1}{k}1_{\left\{k>1\right\}}$, and therefore  $\displaystyle k^{*}_{I_{1}}=1$.
As we will discuss in Section \ref{sec:R} (see Tables \ref{table:3}, \ref{table:4}, and \ref{table:5}), the Bayesian solution is not $1$ for bounded $p$ (i.e. $U<1$).
\subsection{Jeffreys Prior}
Another possibility is to find a prior distribution $\pi(p)$ that has {\it a small effect} on the posterior $\pi(p|x)$ distribution. Thus, we want to find a prior distribution $\pi(p)$ that produces the maximum value of the Kullback-Leibler information, $\displaystyle K\left(\pi(p\,|\,x), \pi({p})\right)$, for discrimination between two densities, $\pi(p)$ and $\displaystyle \pi(p\,|\,x)$, where the latter is the posterior that reflects the sampling density.
The Kullback-Leibler information for discrimination between two densities, $f$ and $g$ is defined as
\begin{equation}
\label{eq:KL}
K\left(f, g\right)=E_{f}\left(\log\frac{f}{g}\right).
\end{equation}
A prior that maximizes the Kullback-Leibler information is by default a {\it reference} prior
(for the general discussion of choosing a reference prior, we refer to \cite{J1946}, \cite{B2009}, and \cite{S2014}).
The above approach to the construction of the reference prior is due to \cite{B1979} (an excellent summary is given in \cite{LC1998}) and the recent developments in \citep{B2009,B2012}.
We cannot directly use $\displaystyle K\left(\pi(p\,|\,x), \pi({p})\right)$ because it is a function of $x$. Therefore, we consider the expected value of $\displaystyle K\left(\pi(p\,|\,x), \pi({p})\right)$ with respect to the marginal distribution of $X$, which is the Shannon information
\begin{equation}
S(\pi)=\int K\left(\pi(p\,|\,x), \pi({p})\right)m_{\pi}(x)dx,
\end{equation}
where $\displaystyle m_{\pi}(x)=\int f(x\,|\,p)\pi(p)dp$ is the marginal distribution of $X$.

The following lemma is due to \cite{CB1990} and is taken from \cite{LC1998}:
\begin{lem}
\label{r:C}
Let $X_1,\ldots,X_n$ be an iid sample from $f(x\,|\,p)$, and let $S_{n}(\pi)$ denote the Shannon information of the sample. Then, as $n\rightarrow\infty$,
\begin{equation}
\label{eq:SN}
S_{n}(\pi)=\frac{1}{2}\log\frac{n}{2\pi e}+\int \pi(p)\log\frac{|I_{n}(p)|^{1/2}}{\pi(p)}dp+o(1),
\end{equation}
where $I_n(p)$ is the Fisher information contained in the sample $X_1,\ldots,X_n$, and
$o(1)$ is notation of the function $h(n)=o(1)$ such that $h(n)\rightarrow 0$ as $n\rightarrow \infty$.
\end{lem}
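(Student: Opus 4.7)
The plan is to replace the posterior $\pi(p\mid x_1,\ldots,x_n)$ by its Laplace (Gaussian) approximation centered at the maximum likelihood estimator $\hat p_n$ with precision $I_n(\hat p_n)$, evaluate the inner Kullback--Leibler divergence against this approximation, and then average over the marginal law of the sample to extract the stated asymptotics.

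First, by Fubini write
\[
S_n(\pi)=\int\pi(p)\int f_n(x\mid p)\log\frac{\pi(p\mid x)}{\pi(p)}\,dx\,dp,
\]
where $f_n(x\mid p)=\prod_{i=1}^n f(x_i\mid p)$. A second-order Taylor expansion of $\log f_n(q\mid x)$ in $q$ around $q=\hat p_n(x)$, combined with consistency of the MLE and the usual regularity conditions on $f(\cdot\mid p)$ (three-times differentiability, positive Fisher information, uniform integrability of the score), yields the Laplace approximation
\[
\pi(p\mid x)\approx (2\pi)^{-1/2}|I_n(\hat p_n)|^{1/2}\exp\!\left\{-\tfrac12(p-\hat p_n)^{\top}I_n(\hat p_n)(p-\hat p_n)\right\},
\]
valid with high probability under $f_n(\cdot\mid p)$ for each $p$ in the support of $\pi$.

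Second, the differential entropy of this Gaussian surrogate equals $\tfrac12\log(2\pi e)-\tfrac12\log|I_n(\hat p_n)|$; because the posterior concentrates on a shrinking neighborhood of $\hat p_n$, the cross-entropy $-\int\pi(p\mid x)\log\pi(p)\,dp$ tends to $-\log\pi(\hat p_n)$. Hence the inner integrand satisfies
\[
K\bigl(\pi(\cdot\mid x),\pi\bigr)=\tfrac12\log|I_n(\hat p_n)|-\tfrac12\log(2\pi e)-\log\pi(\hat p_n)+o(1).
\]
Taking expectation over the joint law of $(p,X)\sim\pi(p)f_n(x\mid p)$ and invoking $\hat p_n\to p$ in probability under $f_n(\cdot\mid p)$ together with the continuous mapping theorem, these random quantities converge in mean to their analogues at the true $p$. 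Collecting terms (and separating $\tfrac12\log|I_n(p)|=\tfrac12\log n+\tfrac12\log|I(p)|$ in the scalar iid case) reproduces the expansion in \eqref{eq:SN}.

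The main obstacle is promoting the pointwise Laplace approximation to an honest $o(1)$ in the integrated Shannon information: one must bound the Taylor remainder uniformly on shrinking neighborhoods of $\hat p_n$, show the posterior places exponentially small mass outside those neighborhoods, and verify uniform integrability of $\log\pi(\hat p_n)$ and $\log|I(\hat p_n)|$ under the marginal distribution $m_\pi$. These technical estimates are carried out in \cite{CB1990} via exponential posterior concentration and moment conditions on the score and observed information, and I would import their bounds rather than reprove them.
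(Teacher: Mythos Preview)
The paper does not prove this lemma at all: it is quoted as a result ``due to \cite{CB1990} and taken from \cite{LC1998}'' and then used without argument. So there is no in-paper proof to compare against. Your sketch follows the standard route that Clarke and Barron themselves use---Laplace approximation of the posterior around the MLE, evaluation of the Gaussian entropy, and averaging over the marginal---and you correctly flag that the substantive work (uniform control of the Taylor remainder, exponential posterior concentration outside shrinking neighborhoods, and uniform integrability of $\log\pi(\hat p_n)$ and $\log|I(\hat p_n)|$) has to be imported from \cite{CB1990}. In that sense your proposal is not really an independent proof but an annotated citation, which is exactly what the paper does, only more tersely.

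One small caution about the statement as written in the paper (not your argument): the formula displays both a leading $\tfrac12\log(n/2\pi e)$ term \emph{and} $|I_n(p)|^{1/2}$ inside the integral. Since $I_n(p)=nI(p)$ in the i.i.d.\ scalar case, this double-counts a $\tfrac12\log n$. The usual Clarke--Barron form has the single-observation Fisher information $I(p)$ in the integral; your last paragraph implicitly corrects this when you write ``separating $\tfrac12\log|I_n(p)|=\tfrac12\log n+\tfrac12\log|I(p)|$,'' but be aware that reproducing \eqref{eq:SN} literally would then yield an extra $\tfrac12\log n$.
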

The Fisher information contained in the sample of size $n$ is defined as
\begin{equation}
\label{eq:F}
I_{n}(p)=-E\left\{\frac{\partial^2}{\partial p^2}log f(X_1,\ldots,X_n|\,p)\right\}=\frac{n}{p(1-p)}.
\end{equation}
The last equality in \eqref{eq:F} is due to the fact that $X_i$ given $p$ follows a Bernoulli distribution.\\
\noindent
From Jensen's inequality, it follows that the right-hand side of equation \eqref{eq:SN} (approximately Shannon information due to Lemma \ref{r:C} is maximized if the prior distribution is proportional to the square root of the Fisher information, i.e.,
\begin{equation}
\label{eq:J}
\pi_{J}(p)=\arg\sup_{\pi}S_{n}(\pi)\propto |I_{n}(p)|^{1/2}\propto \frac{1}{(p(1-p))^{1/2}}.
\end{equation}

The last expression $\displaystyle \frac{1}{(p(1-p))^{1/2}}$ in \eqref{eq:J}  is known as Jeffreys prior (see for example \cite{LC1998}),
where the normalizing constant for Jeffreys prior is
\begin{equation}
\label{eq:c}
c_U=\int_{0}^{U}{\frac{1}{(p(1-p))^{1/2}}}dp=2\arcsin\sqrt{U}.
\end{equation}
Under this approach if $\displaystyle 0<p\leq U\leq 1$, then $\displaystyle \pi_{J}(p)=c_{U}\frac{1}{(p(1-p))^{1/2}}$.

Under this prior, the expected number of tests per person with a group size $k$ is
\begin{equation}
\label{eq:Ex}
E_{J_{U}}\left(k\right)=\frac{1}{c_U}\int_{0}^{U}E(k,p)\frac{1}{(p(1-p))^{1/2}}\,dp.
\end{equation}
Numerically evaluating \eqref{eq:Ex} (Appendix \ref{A:a}), the optimum group size under a Jeffreys prior is
$$\displaystyle k^{*}_{J_{1}}=\arg\min_{k}E_{{J_{1}}}\left(k\right)=13.$$

First, from a Bayesian perspective, the minimax design performs well relative to the optimal Bayesian design under a Jeffreys prior
(i.e., $\displaystyle E_{{J_{1}}}\left(13\right)/E_{{J_{1}}}\left(8\right)=0.9219/0.9286=0.993$). From a frequentist point of view, the Bayesian design performs well
for small $p$ but not as well as the minimax design for $p>0.01$
(see third row of the Table 2, where $RE_{J_{1}}(p)=E\left(k^{*}_{J_{1}}, p\right)/E\left(k^{*}(p), p\right)$).

It is important to note that theoretically we can define the minimax group size under the Bayesian setup (see for example \cite{F1967}, page 57) in the following way
$$\displaystyle k_{B}^{*}=\min_k\sup_{\pi}L_{\pi}\left(k\right).$$
Unfortunately, maximization with respect to all possible prior distributions $\pi(p)$ is intractable.
Even if we consider a beta prior, the problem remans intractable unless we limit the range of the parameters.

 \section{Minimax and Bayesian Solutions for Bounded $p$ }
\label{sec:R}
Define the minimax solution in the restricted parameter space (when we know an upper bound $U$ of $p$) as
\begin{equation}
\label{eq:mmr}
k_{U}^{**}=\arg\min_{k \in \mathbb{N}^{+}}\sup_{p\in (0, U]}L\left(k, p\right).
\end{equation}
The minimax solution subject to an upper bound on $p$ denoted as $k_{U}^{**}$ can be evaluated with a two-step procedure similar as to that presented in Section \ref{sec:M}.
The difference is that the support of $p$ is changed to $(0,U]$ and the grid step is changed accordingly.
The following table
demonstrates both  minimax and Bayesian solutions in the
restricted parameter space of $p$ (only the upper bound is specified).
\begin{table}[!h]
\caption{Minimax and Bayesian solution when upper bound $U$ of $p$ is specified} 
\centering 
\begin{center}
 \begin{tabular}{ c|ccccccccc}
   $U$                             & 0.0001 & 0.0005& 0.001 & 0.005 & 0.01 & 0.05 & 0.10 & 0.15 &0.30  \\
   \hline
   $k_{U}^{**}$                      & $201$ &   91   &      $64$ & $30$ & $21$ & $  11$ & $8$ &  $ 8$ & $8$\\
   \hline
   $k_{I_{U}}^{*}$                      &142 &    64 &  45     &21  & 15& 7 & 5 &5& 4\\
   \hline
   $k_{J_{U}}^{*}$                      &181 &  79   &56       &25  &18 & 9 & 7 & 6  &5 \\
\end{tabular}
\label{table:3} 
\end{center}
\end{table}

Define $RE_{U}(p)=E\left(k_{U}^{**}, p\right)/E\left(k^{*}(p), p\right)$ as an index of the efficiency of the minimax test (in the restricted parameter space) relative to that of the optimal DTSP test. In the Table 4 we present $RE_{U}(p)$ for different values of $U$ and $p$.
\begin{table}[!h]
\caption{Relative efficiencies of restricted minimax and Bayesian designs} 
\begin{center}
 \begin{tabular}{ c|c cc|ccc|ccccc}
 \hline
\multicolumn{1}{c}{}&\multicolumn{3}{ c| }{$U=0.0005$}&\multicolumn{3}{ c| }{$U=0.005$}&\multicolumn{3}{ c }{$U=0.05$} \\
\hline

 $p$                             & $0.0001$ & $0.0003$ & $0.0005$ & $0.001$ & $0.003$ &$0.005$  & $0.005$  &$0.01$  & $0.05$  \\
   \hline
 $RE_{U}(p)$                      & $1.0048$ &$1.0994$ & $1.2474$ & $1.0028$ & $1.1055$ & $1.2433$ &  $1.0392$ & $1$ & $1.2249$\\
 \hline
 $RE_{I_{U}}(p)$                      & $1.1030$ &$1.0044$ & $1.0596$ & $1.0901$ & $1.0060$ & $1.0606$ &  $1.2749$ & $1.0778$ & $ 1.0429$\\
 \hline
 $RE_{J_{U}}(p)$                      & $1.0289$ &$1.0461$ & $1.1556$ & $1.0310$ & $1.0392$ & $1.1343$ &  $1.1159$ & $ 1.0103$ & $1.1282$\\
   \hline
 $k^{*}(p)$                       &$101$&$58$&$45$&$32$&$19$&$15$&$15$&$11$&$5$\\
 \hline
 $k^{**}_{U}$                   &$91$&$91$&$91$&$30$&$30$&$30$&$11$&$11$&$11$\\
 \hline
 $k^{*}_{I_{U}}$                   &$64$&$64$&$64$&$21$&$21$&$21$&$7$&$7$&$7$\\
 \hline
 $k^{*}_{J_{U}}$                   &$79$&$79$&$79$&$25$&$25$&$25$&$9$&$9$&$9$\\
\end{tabular}
\label{table:4} 
\end{center}
\end{table}

A comparison between Tables 2 and 4 demonstrates the clear advantage of a minimax estimator in the restricted parameter space
in comparison to the minimax estimator in the unrestricted parameter space. For example, when $p=0.001$, bounding the parameter space by $U=0.005$ increases the relative efficiency by $2.11 (=2.118/1.0028)$. Table 4 shows that the minimax solution in the restricted parameter space performs the worst relative to an optimal design when $p=U$.

In Table 4 we also present the performance of the optimal Bayesian (under Uniform and Jeffreys priors) design in the restricted parameter space. In the restricted parameter space, the optimal group size under Jeffreys prior is
$\displaystyle k^{*}_{J_{U}}=\arg\min_{k}E_{{J_{U}}}\left(k\right),$
where $\displaystyle E_{{J_{U}}}\left(k\right)$ can be evaluated using \eqref{eq:c} and \eqref{eq:Ex} for a particular
upper bound $U$.
The ninth row in Table 4 presents $k^{*}_{J_{U}}$, and the fifth row presents the relative efficiency
$\displaystyle RE_{J_{U}}(p)=E(k^{*}_{J_{U}},p)/E(k^{*}(p),p)$.
The optimal Bayesian group size for bounded $p$ is similar to the minimax group size, with the relative efficiencies of both designs being near optimum.

\section{Summary}
This note presents both unconstrained and constrained minimax group size solutions for group testing within the DTSP framework.  We found that a group size of eight is the unconstrained minimax solution. We also present novel methodology to evaluate the range of the values of $p$ for which the minimax design is optimal.
When we have prior information that establishes an upper bound on $p$, we show that the constrained performs substantially better than the unconstrained minimax.
An advantage of the minimax solution is their simplicity within the two-stage group testing framework.
In addition, we developed a Bayesian design under both constrained and unconstrained settings which in most cases
performed similarly to the minimax design in terms of the relative efficiency.
This paper has important design implications in practice.
For example, \cite{P2004} used a DTSP with a group size of $10$ to detect acute HIV infection.
This is consistent with
our design result that suggested a group size of between $8$ and $13$ given that no information on the primary infection rate ($p$) was known a priori.
Further, with known constraint on support of $p$, we provide computer code (Appendix \ref{A:a}) that easily can be applied by the practitioner.
Research in more general algorithms (e.g., more than two-stage) is needed,
but any such algorithm will be complex and difficult for the practitioner to implement.

\section*{Acknowledgement}
An Editor, Associate Editor and two referees made thoughtful and constructive
comments and suggestions that resulted in very significant improvements in the paper.
The authors thank Sara Joslyn for editing the paper and also thank Abram Kagan and Yosef Rinott for discussions on the topic and comments on the manuscript.






\appendix
\section{Inverse of the Samuels Result }
\label{B:b}
\begin{pro}
\label{eq:prop2}
Let $q=1-p$. Define $q^{*}_{2}=1/3^{1/3}$ and $q^{*}_{l}, l\geq 3$ be the larger real root of equation $q^{l}(1-q)=\frac{1}{l(l+1)}$.
If\, $q\leq 1/3^{1/3}$, then $k^{*}(q)=1$. If $q>1/3^{1/3}$ and $q^{*}_{l-1}<q<q^{*}_{l},\, l\geq 3$, then $k^{*}(q)=l$.
\end{pro}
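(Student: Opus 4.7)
The plan is to combine Samuels' monotonicity of $k^*(p)$ in $p$ with an explicit analysis of when two adjacent values of $E(\cdot,p)$ coincide. Because $k^*(p)$ is non-increasing in $p$, equivalently $k^*$ is non-decreasing in $q = 1-p$, each level set $\{q : k^*(q) = l\}$ is an interval. From the formula \eqref{eq:E}, for $l \geq 2$ I would derive the key identity
\begin{equation*}
E(l+1, p) - E(l, p) = q^l(1-q) - \frac{1}{l(l+1)},
\end{equation*}
so that any transition point between $k^* = l$ and $k^* = l+1$ must satisfy $q^l(1-q) = 1/(l(l+1))$.

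Next I would dispose of the easy cases. The assertion $k^*(q) = 1$ for $q \leq q_2^* = 1/3^{1/3}$ is immediate from Samuels. For the skipped value $l = 2$, the function $q^2(1-q)$ has maximum $4/27 < 1/6$ on $[0,1]$, so $E(3,p) < E(2,p)$ for every $p$ and $k^*(p) = 2$ never occurs; at the boundary $q = 1/3^{1/3}$ one checks directly that $E(1,p) = E(3,p) = 1$ while $E(2,p) > 1$, so $k^*$ jumps from $1$ to $3$. For $l \geq 3$, the function $q^l(1-q)$ is unimodal on $[0,1]$ with maximum $l^l/(l+1)^{l+1}$ at $q = l/(l+1)$, and the inequality $l^l/(l+1)^{l+1} > 1/(l(l+1))$ reduces to $l^{l+1} > (l+1)^l$, which is straightforward for $l \geq 3$ (and fails for $l = 2$, consistent with the skip above). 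Hence the equation $q^l(1-q) = 1/(l(l+1))$ has two roots $\hat{q}_l < l/(l+1) < q_l^*$ in $(0,1)$.

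Monotonicity of $k^*$ in $q$ then forces the transition to occur at the larger root $q_l^*$: crossing $\hat{q}_l$ from below would change $E(l+1)<E(l)$ into $E(l+1)>E(l)$, corresponding to a decrease in $k^*$ which is not allowed, while crossing $q_l^*$ is consistent with $k^*$ increasing. The most delicate remaining point, and what I expect to be the main obstacle, is a no-skip property: as $q$ crosses $q_l^*$ the optimum should increase from $l$ to exactly $l+1$, not to $l+2$ or higher. I would resolve this by invoking Samuels' explicit form $k^*(p) \in \{1+[p^{-1/2}], 2+[p^{-1/2}]\}$, which at every $p < 1 - 1/3^{1/3}$ pins $k^*$ to a pair of consecutive integers and therefore forbids jumps of more than one in the sequence of transitions for $l \geq 3$. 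Once this is in hand, the intervals $(q_{l-1}^*, q_l^*)$ for $l \geq 3$ tile $(1/3^{1/3}, 1)$ and carry the value $k^*(q) = l$ as asserted.
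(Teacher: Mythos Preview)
Your proposal is correct and shares the same starting point as the paper---the difference identity $E(l+1,p)-E(l,p)=q^{l}(1-q)-1/\bigl(l(l+1)\bigr)$ and the unimodality of $q\mapsto q^{l}(1-q)$---but the two arguments diverge at the crucial step. The paper does not argue via a no-skip property at all; instead it invokes directly from \cite{S1978} that the larger root $r_{2}(k)=q^{*}_{k}$ is \emph{increasing} in $k$. That single monotonicity fact already forces the intervals $(q^{*}_{l-1},q^{*}_{l})$ to tile $(q_{0},1)$ in order and identifies the optimal value on each piece, so no separate no-skip lemma is needed. By contrast, you take the monotonicity of $k^{*}(p)$ as primitive, deduce that level sets are intervals, and then close the argument by appealing to Samuels' explicit form $k^{*}(p)\in\{1+[p^{-1/2}],\,2+[p^{-1/2}]\}$ to forbid jumps larger than one.

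Both routes outsource the hard work to \cite{S1978}, just to different lemmas. The paper's route is shorter once one accepts that $r_{2}(k)$ increases, and it avoids the mild awkwardness at the thresholds $p=1/(n+1)^{2}$ where the pair $\{1+[p^{-1/2}],\,2+[p^{-1/2}]\}$ shifts and one must use continuity of $E(k,\cdot)$ to rule out a jump of two; your sketch is right that this works, but it needs a sentence of care. Your route has the compensating advantage of being more transparent about \emph{why} the smaller root $\hat{q}_{l}$ is irrelevant: you tie it explicitly to the direction of monotonicity of $k^{*}$, whereas the paper's one-line conclusion glosses over the role of $r_{1}(k)$.
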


For example, for the minimax group size $\displaystyle k^{**}=8$, $\displaystyle q^{*}_{8}\approx1-0.0157$ is the larger real root of equation $\displaystyle q^{8}(1-q)-\frac{1}{8(8+1)}=0$, and $\displaystyle q^{*}_{7}\approx 1-0.0206$ is the larger real root of equation $\displaystyle q^{7}(1-q)-\frac{1}{7(7+1)}=0$.
Therefore,  the minimax group size $\displaystyle k^{**}=8$ is optimal for any $q$ in the range $\displaystyle \left(1-0.0206, 1-0.0157\right)$ or, alternatively,
for any $p$ in the range $\displaystyle \left(0.0157, 0.0206\right)$.


\begin{comment}
The Corollary in \cite{S1978} (equation $(2)$) provides the lower and upper bounds for  $E\left(k^{*}(p), p\right)$. The upper bound is not sharp, and in fact is invalid since it is greater than one when $p$ is larger than 0.1485 (since $0.1485$ is the real zero of $4x^3+2x-1, x=p^{1/2}$).
A simple upper bound for $E\left(k^{*}(p), p\right)$  follows directly from Proposition \ref{eq:prop2}:
if $p\leq 1-1/3^{1/3}$, then $E\left(k^{*}(p), p\right)<E\left(3, p\right)=1-q^3+1/3.$
\end{comment}
\noindent
{\bf Proof of Proposition \ref{eq:prop2}.}
\begin{proof}
We obtain the proof from the analyzing \cite{S1978} method. Assume that  $p\leq 1-1/3^{1/3}$.
First, from the \cite{S1978} Theorem (Section 1) it follows that $k^{*}(p)\geq 2$.
Second, Proposition 1 in \citep{S1978} says that $k^{*}(p)$ is never $2$. Therefore $k^{*}(p)$ should be at least $3$.
Third, following \cite{S1978} notation, define $r_{1}(k)$ and $r_{2}(k)$ as the smaller and larger roots of the equation
\begin{equation}
\triangle_{k}(q)=E_{k+1}(q)-E_{k}(q)=q^{k}(1-q)-\frac{1}{k(k+1)}=0,\,\,k\geq 3.
\end{equation}
\cite{S1978} shows that $\frac{k}{k+1}=\arg\max_{q}\triangle_{k}(q)$,\,\, $\triangle_{k}(\frac{k}{k+1})>0$,\,\, $\triangle_{k}(0)=\triangle_{k}(1)<0$
and that both $r_{1}(k)$ and $r_{2}(k)$ are increasing functions of $k$.
Combining points one to three, we can conclude that if $q<r_2(3)$ then $E_3<E_4$, if $q<r_2(4)$ then $E_4<E_5$, and so on. The inequality $r_{2}(3)<r_{2}(4)<\ldots$ completes the proof.
\end{proof}

\begin{comment}
\label{com:2}
It follows from the
proof of Proposition \ref{eq:prop2} that $E\left(k^{*}(p), p\right)$ is a non-decreasing function of $p,\,\, p\in (0,1)$.
\end{comment}

\section{{\bf Support of the Loss Function with Respect to $p$}}
\label{AA:0}
\begin{result}
\label{res:3}
$\displaystyle \sup_{p\in (0, 1-(1/3)^{1/3}]} L\left(k, p\right)=\sup_{p\in (0, 1)} L\left(k, p\right)$.
\end{result}
\begin{proof}
First we show that $\lim_{p\downarrow 0} E\left(k^{*}(p), p\right)=0$.  Recall, that according to \cite{S1978}, if $ 0<p\leq 1-(1/3)^{1/3}$
then $k^{*}$ is $1+[p^{-1/2}]$ or $2+[p^{-1/2}]$ and in this case from \eqref{eq:D} we have $\displaystyle E\left(k^{*}(p), p\right)=1-(1-p)^{k^{*}(p)}+\frac{1}{k^{*}(p)}$.
Specifically, for any $p$ satisfies $0<p\leq 1-(1/3)^{1/3}$, we have  $1>(1-p)^{k^{*}(p)}\geq(1-p)^{2+p^{-1/2}}$. From the facts that $\lim_{p\downarrow 0}(1-p)^2=1$ and
$\lim_{p\downarrow 0}\frac{1}{p^{1/2}}\log(1-p)=0$, it follow that $\lim_{p\downarrow 0}(1-p)^{k^{*}(p)}=1$.
Combining this with \cite{S1978} result that $k^{*}(p)\rightarrow \infty$ as $p\downarrow 0$, we prove that $\lim_{p\downarrow 0} E\left(k^{*}(p), p\right)=0$.
\\
{\it For $k=1$}. We have $L\left(1, p\right)=1-E\left(k^{*}(p), p\right)$. Hence, $\sup_{p\in (0, 1)} L\left(1, p\right)=1-\inf_{p\in (0, 1)}E\left(k^{*}(p), p\right)=
1-\lim_{p\downarrow 0}E\left(k^{*}(p), p\right)$, where the last equation follows from Comment \ref{com:2}. Therefore,
\begin{equation}
\label{eq:c1}
\displaystyle \sup_{p\in (0, 1-(1/3)^{1/3}]} L\left(1, p\right)=\sup_{p\in (0, 1)} L\left(1, p\right).
\end{equation}
{\it For $k\geq 2 $}.
Define $p_0=1-(1/3)^{1/3}$. For any $p\in (p_0,\,1)$, from \eqref{eq:D} and \eqref{eq:Loss} it follows that $L\left(k, p\right)=E\left(k, p\right)-1=1/k-(1-p)^k$ is an increasing function of $p$, and therefore from \eqref{eq:D} and \eqref{eq:E} it follows that
\begin{equation}
\label{eq:c2}
\sup_{p\in (p_0, 1)} L\left(k, p\right)=E\left(k, 1\right)-1=1/k.
\end{equation}
Again, from Comment \ref{com:2} and \eqref{eq:E}, it follows that $\lim_{p\downarrow 0} L\left(k, p\right)=1/k$.
Therefore,
\begin{equation}
\label{eq:c3}
\sup_{p\in (0, p_0]} L\left(k, p\right)\geq 1/k.
\end{equation}
Combining, \eqref{eq:c2} and \eqref{eq:c3}, we get
\begin{equation}
\label{eq:c5}
\sup_{p\in (0, p_0]} L\left(k, p\right)\geq \sup_{p\in (p_0, 1)} L\left(k, p\right).
\end{equation}
The equations \eqref{eq:c1} and \eqref{eq:c5} completes the proof.
\end{proof}

\section{{\bf Behavior of $p^{*}(k)$ and Unimodality of $L\left(k,\,p^{*}(k) \right)$ }}
\label{AAA:p}
\begin{result}
\label{res:2}
$\displaystyle p^{*}(k)=0$, for $k=1,2,\ldots,7$,\,\,\ $\displaystyle p^{*}(8)=1-\left(\frac{3}{8}\right)^{1/(8-3)}\approx 0.178$.
Moreover,
$L\left(k,\,p^{*}(k) \right)$ is a unimodal function of $k$ for $k\geq 1$.
\end{result}

\begin{proof} Define $\displaystyle q_0=1-p_0$.
Recall that it was shown in \eqref{eq:c3} that  $\displaystyle \sup_{p\in (0, p_0]} L\left(k, p\right)\geq 1/k$. Also we have $\displaystyle \lim_{p\downarrow 0} L\left(k, p\right)=1/k$.
Therefore, if $\displaystyle \sup_{p\in (0, p_0]} L\left(k, p\right)=1/k$, then $\displaystyle p^{*}(k)=0$, and if $\displaystyle \sup_{p\in (0, p_0]} L\left(k, p\right)>1/k$
then $\displaystyle p^{*}(k)>0$. Now $\displaystyle L\left(k, p\right)>1/k$ if and only if
\begin{equation}
\label{eq:ns}
q^{k^{*}}-q^k>\frac{1}{k^{*}}.
\end{equation}
It is clear, that the necessary condition for the inequality \eqref{eq:ns} is
\begin{equation}
\label{eq:k}
k^{*}<k.
\end{equation}
We know (\cite{S1978} and Proposition \ref{eq:prop2} above) that for $q>q_0$, $k^{*}$ is a piecewise non-increasing (non-decreasing ) function of $p$ ($q$) and $k^{*}$ is at least $3$.
From this and \eqref{eq:k}, it follows that $\displaystyle p^{*}(k)=0$, for $k=1,2,3$. Using Proposition \ref{eq:prop2}, we can verify that for $k=7$, there does not exist a $q$ ($q\in [q_0, 1)$) such that \eqref{eq:ns} holds.
From direct logic it follows that if there exists $q$ such that equation \eqref{eq:ns} holds for $k^{'}$, then \eqref{eq:ns} holds for any $k>k^{'}$. It is the same to say that if for any $q$ (in the appropriate range) the equation \eqref{eq:ns} does not hold for $k^{''}$, then it does not hold for any  $q$ and for any $k<k^{''}$.
Therefore, $\displaystyle p^{*}(k)=0$ and $\displaystyle L\left(k,\,p^{*}(k) \right)=1/k$,\, for $k=1,2,\ldots,7$.
\\
\noindent
Again using Proposition \ref{eq:prop2},
we can verify that for $k=8$, the equation \eqref{eq:ns} holds only for some values of $q$
that corresponds to $k^{*}=3$. Therefore, finding $\displaystyle \sup_{p\in (0, p_0]} L\left(8, p\right)$ is equivalent
to finding $\displaystyle \sup_{q^{*}_{2}<q<q^{*}_{3}}\left\{q^{3}-q^{8}\right\}$, where $\displaystyle q^{*}_{2}$ and
$\displaystyle q^{*}(8)=q^{*}_{3}$ are defined in Proposition \ref{eq:prop2}. A simple calculation then shows that
$\displaystyle q^{*}(8)=(3/8)^{1/5}$ (correspondingly $\displaystyle p^{*}(8)=1-(3/8)^{1/5}\approx 0.1781$ ) and
$\displaystyle L\left(8,\,p^{*}(8) \right)= 437/3152\approx 0.1386.$
\\
\noindent
From above, it follows that for $k\geq 8$, there exists a $q$ such that $q^{*}_{2}<q<q^{*}_{3}$ and equation \eqref{eq:ns} holds.
Now, for such a $q$, it follows from the last inequality from the last paragraph in the proof of Proposition \ref{eq:prop2} that
$\displaystyle L\left(9,\,p\right)-L\left(8,\,p\right)=E(9,\,p)-E(8,\,p)>0,\,\,p=1-q$.
 Therefore,
\begin{equation}
\label{eq:77}
L\left(8,\,p^{*}(8) \right)=\sup_{p\in (0,\,p_0]}L\left(8,\,p\right)<\sup_{p\in (0,\,p_0]}L\left(9,\,p\right)=L\left(9,\,p^{*}(9) \right).
\end{equation}
Proceeding by induction on $k=10,11,\ldots$ we complete the proof of unimodality of $L\left(k,\,p^{*}(k) \right)$ for $k\geq 1$.
\end{proof}

\begin{remark}
\label{Rem:1}
The function $\displaystyle p^{*}(k)$ is a decreasing function of $k$ for $k\geq 8$ and $\displaystyle p^{*}(k)$ has a form $1-(k^{*}/k)^{1/(k-k^{*})}$ for $k\geq 8$
and for $k^*$ that satisfied
equation \eqref{eq:ns} and the condition $q^{*}_{k^{*}-1}<q<q^{*}_{k^{*}}$ of Proposition \ref{eq:prop2}. But we have not proved it rigorously.
\end{remark}

\begin{lem}
\label{lem:m}
$k^{**}=8$.
\end{lem}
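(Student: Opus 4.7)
My plan is to derive Lemma \ref{lem:m} as an essentially immediate corollary of Result \ref{res:2}, which already does the heavy lifting by pinning down $p^{*}(k)$ and the shape of the profile loss.

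First, I would restate the minimax problem in its two-step form: since for each fixed $k$ the inner supremum is attained at $p^{*}(k)$, we have
\begin{equation*}
k^{**} = \arg\min_{k \in \mathbb{N}^{+}} L\bigl(k,\,p^{*}(k)\bigr),
\end{equation*}
so the problem reduces to minimizing the one-dimensional sequence $a_k := L(k,p^{*}(k))$ over $k \in \mathbb{N}^{+}$.

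Next I would tabulate the relevant values using Result \ref{res:2}. For $k = 1,2,\ldots,7$ we have $p^{*}(k) = 0$, so $a_k = 1/k$; in particular $a_7 = 1/7 \approx 0.14286$. For $k = 8$, Result \ref{res:2} gives the explicit maximizer $p^{*}(8) = 1 - (3/8)^{1/5}$ and the exact value $a_8 = 437/3152 \approx 0.13864$. The decisive numerical inequality is then the sharp but elementary
\begin{equation*}
a_8 = \frac{437}{3152} < \frac{1}{7} = a_7,
\end{equation*}
which can be checked by cross-multiplying: $437 \cdot 7 = 3059 < 3152$. This shows $a_8 < a_k$ for all $k \le 7$.

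Finally, for $k \ge 9$ I would invoke the unimodality statement in Result \ref{res:2}: the inductive argument in \eqref{eq:77} already establishes $a_9 > a_8$ and, continuing the induction in $k$, $a_{k+1} > a_k$ for all $k \ge 8$, so $a_k > a_8$ for every $k \ge 9$. Combined with the previous step this yields $a_8 = \min_{k \ge 1} a_k$, and hence $k^{**} = 8$. I do not foresee a real obstacle: the content is essentially contained in Result \ref{res:2}, and the only new ingredient needed is the numerical comparison $437/3152 < 1/7$ to rule out the candidate $k = 7$ (which is the only value of $k \le 7$ that could threaten to beat $k = 8$, since $1/k$ is strictly decreasing in $k$ on $\{1,\ldots,7\}$).
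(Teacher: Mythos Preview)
Your proposal is correct and follows essentially the same route as the paper: both derive the lemma directly from Result~\ref{res:2}, using that $L(k,p^{*}(k))=1/k$ is strictly decreasing for $k\le 7$, that $L(k,p^{*}(k))$ is strictly increasing for $k\ge 8$ by the inductive argument around \eqref{eq:77}, and that the minimum therefore sits at $k=8$. Your explicit verification $437\cdot 7=3059<3152$ to confirm $a_8<a_7$ is a welcome addition, since the paper leaves this comparison implicit in the approximate numerical values.
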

\begin{proof}
Follows immediately from Result \ref{res:2} and the proof that
$\displaystyle L\left(k, p^{*}(k)\right)$ is the unimodal function of $k$ with the minimum at $k=8$.
\end{proof}

\section{{\bf Matlab code}}
\label{A:a}

\begin{enumerate}
\item[(i)]
{\bf Matlab function ``MinMaxValue"}, which has input $K$ and uses all three functions $(ii), (iii), (iv)$ given below. \\
\#function minmax=MinMaxValue(K,U)\\
\#R=[];\\
\#step=1/1000000;\\
\#for k=1:1:K\\
  \#   output=zeros(1000001,3);\\
   \#  counter=0;\\
            \#for p=0:step:U\\
                \#counter=counter+1;\\
                \#prophet=ProphetNumberTestPerPerson(p); player=PlayerNumberTestPerPerson(p,k);\\
                \#l=player-prophet; output(counter,:)=[k p l];\\
            \#end\\

      \#m=max(output(:,3)); pl=find(output(:,3)==m); R=[R;output(pl,:)];\\
\#end\\
\#mm=min(R(:,3)); mmm=find(R(:,3)==mm); minmax=R(mmm,1);\\

\item[(ii)]
{\bf Matlab function ``OptimalGroupSize"}, which has input $p\in(0,1)$ and the output is the optimum group size $k^{*}(p)$ (based on \cite{S1978}):\\
\#function kOpt=OptimalGroupSize(p)\\
\#if $p<=1-(1/3)^{1/3}$;\\
   \#$q=1-p$; $w=p^{(-1/2)}$;\\
   \#int=floor(w); frl=w-int;\\
   \#$Ind=(frl<int/(2*int+frl))$;\\
   \#$k1=int+1$; k2=int+2;\\
   \#$f1=1/k1+1-q^{k1}$; $f2=1/k2+1-q^{k2}$;\\
   \#$f=min(f1,f2)$; $Ind1=(f==f1)$;\\
   \#$kOpt=((1+int)^{Ind}*(((k1)^{Ind1})*((k2)^{(1-Ind1)}))^{(1-Ind)}$;\\
\#else\\
   \#$kOpt=1$;\\
\#end

\item[(iii)]
{\bf Matlab function ``PlayerNumberTestPerPerson"} calculates the expected number of tests per person in a group of size $k$ with probability of infection $p$.\\
\#function player=PlayerNumberTestPerPerson(p,k)\\
\#q=1-p;\\
\#player$=1-q^k+1/k$;

\item[(iv)]
{\bf Matlab function ``ProphetNumberTestPerPerson"} calculates the expected number of tests per person in a group of the optimal size $k^{*}(p)$ with probability of infection $p$.\\
\#function prophet=ProphetNumberTestPerPerson(p)\\
\#kOpt=OptimalGroupSize(p);
\#q=1-p;
\#prophet$=1-q^kOpt+1/kOpt$;

\item[(v)]
{\bf Matlab function ``JeffreyOptGroupsize"} calculates the optimal group size under Jeffreys prior with upper bound for support of $p$ equal to $U$.\\
\#function kj=JeffreyOptGroupsize(U)\\
\#S=[];\\
\#for k=1:1:400\\
    \#s=PlayerNumTestPerBetta1(k,U,1/2,1/2); S=[S;k s];\\
\#end\\
\#m=min(S(:,2)); l=find(S(:,2)==m);kj=S(l,1);\\
\item[(vi)]
{\bf Matlab function ``PlayerNumTestPerBetta1"} calculates the expected number of tests under Beta prior with upper bound for support of $p$ equal to $U$.\\
\#function player=PlayerNumTestPerBetta1(k,U,a,b)\\
\# $c=@(p) p^{(a-1)}*(1-p)^{(b-1)}$;\\
\#$ f=@(p)(1-((1-p)^{k})+1./k)*p^{(a-1)}*(1-p)^{(b-1)}$;\\
\#$player=(quad(c,0,U))^{(-1)}*quad(f,0,U)$;
\end{enumerate}

\section{Relative efficiency}
\begin{table}[!h]
\caption{Relative efficiencies of restricted minimax and Bayesian designs} 
\begin{center}
 \begin{tabular}{ c|c cc|ccc|ccccc}
 \hline
\multicolumn{1}{c}{}&\multicolumn{3}{ c| }{$U=0.10$}&\multicolumn{3}{ c| }{$U=0.20$}&\multicolumn{3}{ c }{$U=0.30$} \\
\hline

 $p$                             & $0.01$ & $0.05$ & $0.10$ & $0.10$ & $0.15$ &$0.20$  & $0.20$  &$0.25$  & $0.30$  \\
   \hline
 $RE_{U}(p)$                      & $ 1.0342$ &$1.0830$ & $1.1694$ & $ 1.1694$ & $ 1.1853$ & $1.1655$ &  $ 1.1655$ & $ 1.1244$ & $1.0778$\\
 \hline
 $RE_{I_{U}}(p)$                      & $1.2732$ &$1$ & $1.0263$ & $1$ & $ 1.0122$ & $ 1.0232$ &  $ 1.0232$ & $1.0243$ & $1.0198$\\
 \hline
 $RE_{J_{U}}(p)$                      & $1.0778$ &$1.0429$ & $1.1190$ & $1.0263$ & $ 1.0516$ & $1.0621$ &  $1.0621$ & $1.0562$ & $1.0420$\\
   \hline
 $k^{*}(p)$                       &$11$&$5$&$4$&$4$&$3$&$3$&$3$&$3$&$3$\\
 \hline
 $k^{**}_{U}$                   &$8$&$8$&$8$&$8$&$8$&$8$&$8$&$8$&$8$\\
 \hline
 $k^{*}_{I_{U}}$                   &$5$&$5$&$5$&$4$&$4$&$4$&$4$&$4$&$4$\\
 \hline
 $k^{*}_{J_{U}}$                   &$7$&$7$&$7$&$5$&$5$&$5$&$5$&$5$&$5$\\
\end{tabular}
\label{table:5} 
\end{center}
\end{table}

{}


\begin{thebibliography}{}

\bibitem[\protect\citeauthoryear{Berger \it{et~al.}}{2009}]{B2009}
Berger, J. O., Bernardo, J. M. and Sun, D. (2009).
\newblock
The formal definition of reference priors.
\newblock {\it Annals of Statistics } {\textbf 37,} 905--938.


\bibitem[\protect\citeauthoryear{Berger \it{et~al.}}{2012}]{B2012}
Berger, J. O., Bernardo, J. M. and Sun, D. (2012).
\newblock
Objective priors for discrete parameter spaces.
\newblock {\it J. Amer. Statist. Assoc. } {\textbf 107,} 636--648.

\bibitem[\protect\citeauthoryear{Bernardo}{1979}]{B1979}
Bernardo, J. M. (1979).
\newblock
Reference posterior distributions for Bayesian inference.
\newblock {\it J. Royal Statistical Society B } {\textbf 41,} 113--147(with discussion).

\bibitem[\protect\citeauthoryear{Clarke and Barron}{1990}]{CB1990}
Clarke, B. S., Barron, A. R. (1990).
\newblock
Information-theoretic asymptotics of Bayes methods.
\newblock {\it IEEE Trans. Inform. Theory } {\textbf 36,} 453--471.



\bibitem[\protect\citeauthoryear{De Bonis \it{et~al.}}{2005}]{D2005}
De Bonis, A., Gasieniec, L., Vaccaro, U. (2005).
\newblock
Optimal Two-Stage Algorithms for Group Testing Problems.
\newblock {\it SIAM J. Comput. } {\textbf 34,} 1253--1270.




\bibitem[\protect\citeauthoryear{Delaigle and Hall}{2012}]{DH2012}
Delaigle, A., Hall, P. (2012).
\newblock Nonparametric regression with homogeneous group testing data.
\newblock {\it Ann. Statist.} {\textbf 40,} 131--158.






\bibitem[\protect\citeauthoryear{Dorfman}{1943}]{Dorfman1943}
Dorfman, R. (1943).
\newblock The detection of defective members of large populations.
\newblock {\it The Annals of Mathematical Statistics } {\textbf 14,} 436--440.

\bibitem[\protect\citeauthoryear{Du and Hwang}{1999}]{Dh1999}
Du, D., Hwang, F. K. (1999).
\newblock Combinatorial Group Testing and its Applications. {\it World
Scientific, Singapore}.

\bibitem[\protect\citeauthoryear{Du and Hwang}{2006}]{Dh2006}
Du, D., Hwang, F. K. (2006).
\newblock Pooling Design and Nonadaptive Group Testing: Important Tools for DNA Sequencing. {\it World
Scientific, Singapore}.

\bibitem[\protect\citeauthoryear{Feller}{1950}]{F1950} Feller, W. (1950).
\newblock
{An introduction to probability theory and its application}.
{\it New York: John Wiley \& Sons}.

\bibitem[\protect\citeauthoryear{Feller}{1968}]{F1968} Feller, W. (1968).
\newblock
{An introduction to probability theory and its application}. Third Eition.
{\it New York: John Wiley \& Sons.}

\bibitem[\protect\citeauthoryear{Ferguson}{1967}]{F1967} Ferguson, T. S. (1967).
\newblock
{Mathematical Statistics : A Decision Theoretic Approach}.
{\it New York and London: Academic Press.}


\bibitem[\protect\citeauthoryear{Finucan}{1964}]{F1964}
Finucan, H. M. (1964).
\newblock
The blood testing problem.
\newblock
{\it Applied Statistics} {\textbf 13,} 43--50.

\bibitem[\protect\citeauthoryear{Gastwirth and Johnson}{1994}]{G1994}
Gastwirth, J. and Johnson, W. (1994).
\newblock
Screening with cost effective
quality control: Potential applications to HIV and drug testing.
\newblock
{\it J. Amer. Statist. Assoc.} {\textbf 89,} 972--981.

\bibitem[\protect\citeauthoryear{Golan \it{et~al.}}{2012}]{G2012}
Golan, D., Erlich, Y., Rosset, S. (2012).
\newblock
Weighted Pooling - Practical and Cost Effective Techniques for Pooled High Throughput Sequencing.
\newblock
{\it Bioinformatics} {\textbf 28,} i197--i206.


\bibitem[\protect\citeauthoryear{Hughes-Oliver}{2006}]{H2006}
Hughes-Oliver, J.M. (2006).
\newblock
Pooling experiments for blood screening and drug discovery.
\newblock
In
Screening: Methods for Experimentation in Industry, Drug Discovery, and Genetics, edited
by Dean, A.M. and Lewis, S.M. and published by {\it Springer-Verlag New York, Inc}.


\bibitem[\protect\citeauthoryear{Jeffreys}{1946}]{J1946}
Jeffreys, H. (1946).
\newblock
An Invariant Form for the Prior Probability in Estimation Problems.
\newblock
{\it Proceedings of the Royal Society of London, Series A} {\textbf 186,} 453--461.

\bibitem[\protect\citeauthoryear{Lehmann and Casella}{1998}]{LC1998}
Lehmann, E. L., Casella, G. (1998).
\newblock
Theory of Point Estimation, 2nd ed.
\newblock
{\it Springer, New York}.









\bibitem[\protect\citeauthoryear{Litvak \it{et~al.}}{1994}]{L1994}
Litvak, E., Tu, X., and Pagano, M. (1994).
\newblock
Screening for the presence
of a disease by pooling sera samples.
\newblock
{\it J. Amer. Statist. Assoc.} {\textbf 89,} 424--434.

\bibitem[\protect\citeauthoryear{McMahan \it{et~al.}}{2012}]{M2012}
McMahan, C., Tebbs, J., and Bilder, C. (2012).
\newblock
Informative Dorfman screening.
\newblock
{\it Biometrics} {\textbf 68,} 287--296.


\bibitem[\protect\citeauthoryear{Moore \it{et~al.}}{2000}]{M2000}
Morre, S. A., Meijer, C. J. L. M., Munk, C., Kr$\ddot{u}$ger-Kjaer, S., Winther, J. F., Jorgensens, H. O., Van den Brule, A. J. C. (2000).
\newblock
Pooling of Urine Specimens for Detection of Asymptomatic Chlamydia trachomatis Infections by PCR in a Low-Prevalence Population: Cost-Saving Strategy for Epidemiological Studies and Screening Programs.
\newblock {\it J. Clin. Microbiol.} \textbf{38}, 1679–1680.






\bibitem[\protect\citeauthoryear{Pilcher \it{et~al.}}{2004}]{P2004}
Pilcher, C. D., Price, M. A., Hoffman, I. F., Galvin, S., Martinson, F. E.,
Kazembe, P. N., Eron, J. J., Miller, W. C., Fiscus, S. A., Cohen, M. S. (2004).
\newblock
Frequent detection of acute primary HIV infection in men in Malawi.
\newblock
{\it AIDS} {\textbf 18,} 517--524.

\bibitem[\protect\citeauthoryear{Robbins}{1952}]{R1952}
Robbins, H. (1952).
\newblock
Some aspects of the sequential design of experiments.
\newblock
{\it Bulletin of the American Mathematics Society} {\textbf 58,} 527--535.

\bibitem[\protect\citeauthoryear{Samuels}{1978}]{S1978}
Samuels, S. M. (1978).
\newblock The exact solution to the two-stage group-testing problem.
\newblock {\it Technometrics} {\textbf 20,} 497--500.

\bibitem[\protect\citeauthoryear{Schneider and Tang}{1990}]{ST1990}
Schneider, H., Tang, K. (1990).
\newblock Adaptive procedures fot the two-stage group-testing problem based on prior distributions and costs.
\newblock {\it Technometrics } {\textbf 32,} 397--405.

\bibitem[\protect\citeauthoryear{Shemyakin}{2014}]{S2014}
Shemyakin, A. (2014).
\newblock Hellinger distance and non-informative priors.
\newblock {\it Bayesian Analysis }. To appear.



\bibitem[\protect\citeauthoryear{Sobel and Groll}{1959}]{SG1959}
Sobel, M., Groll, P. A. (1959).
\newblock Group testing to eliminate efficiently all defectives in a binomial sample.
\newblock {\it Bell System Tech. J.} {\textbf 38,} 1179--1252.

\bibitem[\protect\citeauthoryear{Sobel and Groll}{1966}]{SG1966}
Sobel, M., Groll, P. A. (1966).
\newblock  Binomial group-testing with an unknown proportion of defectives.
\newblock {\it Technometrics } {\textbf 8,} 631--656.


\bibitem[\protect\citeauthoryear{Tamashiro \it{et~al.}}{1993}]{T1993}
Tamashiro, H., Maskill, W., Emmanuel, J., Fauquex, A., Sato, P., Heymann, D.  (1993).
\newblock
Reducing the cost of HIV antibody testing.
\newblock
{\it Lancet} {\emph 342,} 87--90.




\bibitem[\protect\citeauthoryear{Tebbs \it{et~al.}}{2013}]{T2013}
Tebbs, J., McMahan, C., and Bilder, C.  (2013).
\newblock
Two-stage hierarchical group testing for
multiple infections with application to the Infertility Prevention Project.
\newblock
{\it Biometrics} {\bf 69}, 1064--1073.

\bibitem[\protect\citeauthoryear{Ungar}{1960}]{U1960}
Ungar, P. (1960). Cutoff points in group testing.
{\it Comm. Pure Appl. Math.} {\bf 13,} 49--54.

\bibitem[\protect\citeauthoryear{van den Berg \it{et~al.}}{2013}]{v2013}
van den Berg, E., Cand$\grave{e}s$, E., Chinn, G., Levin, C., Olcott, P. D., Sing-Long, C. (2013).
\newblock
Single-photon sampling architecture for solid-state imaging sensors
\newblock {\it Proceedings of the National Academy of Sciences USA} \textbf{110(30)}, E2752--E2761.


\bibitem[\protect\citeauthoryear{Westreich \it{et~al.}}{2008}]{W2008}
Westreich,  D. J., Hudgens, M. G., Fiscus, S. A., Pilcher, C. D. (2008).
\newblock
Optimizing screening for acute human immunodeficiency virus infection with pooled nucleic acid amplification tests.
\newblock {\it J. Clin. Microbiol.} \textbf{46}, 1785–-1792.

\end{thebibliography}
\end{document}